\newtheorem{theorem}{Theorem}[section]
\newtheorem{lemma}[theorem]{Lemma}
\newtheorem{proposition}[theorem]{Proposition}
\begin{document}

\title{Distinct volume subsets}
\author{{David Conlon}\thanks{Mathematical Institute, Oxford OX2 6GG, UK. Email: {\tt david.conlon@maths.ox.ac.uk}. Research supported by a Royal Society University Research Fellowship.}
\and{Jacob Fox}\thanks{Department of Mathematics, MIT, Cambridge, MA 02139-4307. Email: {\tt fox@math.mit.edu}. Research supported by a Packard Fellowship, by a Simons Fellowship, by NSF grant DMS-1069197, by an Alfred P. Sloan Research Fellowship and by an MIT NEC Corporation Award.}
\and
{William Gasarch}\thanks{
Department of Computer Science,
       University of Maryland at College Park, College Park, MD 20742.
Email: \texttt{gasarch@cs.umd.edu}.
}
\and
{David G. Harris}\thanks{Department of Applied Mathematics,
        University of Maryland at College Park, College Park, MD 20742.
Email: \texttt{davidgharris29@hotmail.com}.
}
\and
{Douglas Ulrich}\thanks{
Department of Mathematics,
        University of Maryland at College Park, College Park, MD 20742.
Email: \texttt{ds\_ulrich@hotmail.com}.
}
\and
{Samuel Zbarsky}\thanks{
Department of Mathematics,
Carnegie Mellon University, Pittsburgh, PA 15213.
Email: \texttt{sa\_zbarsky@yahoo.com}.
}
}
\date{}

\maketitle

\begin{abstract}
Suppose that $a$ and $d$ are positive integers with $a \geq 2$. Let $h_{a,d}(n)$ be the largest integer $t$ such that any set of $n$ points in $\mathbb{R}^d$ contains a subset of $t$ points for which all the non-zero volumes of the ${t \choose a}$ subsets of order $a$ are distinct. Beginning with Erd\H{o}s in 1957, the function $h_{2,d}(n)$ has been closely studied and is known to be at least a power of $n$. We improve the best known bound for $h_{2,d}(n)$ and show that $h_{a,d}(n)$ is at least a power of $n$ for all $a$ and $d$.
\end{abstract}

\section{Introduction} 

The Erd\H{o}s distinct distance problem is one of the foundational problems in discrete geometry. This problem, first stated by Erd\H{o}s~\cite{Er46} in 1946, asks for an estimate of the minimal number $g_d(n)$ of distances defined by $n$ points in $\mathbb{R}^d$. In the plane, the $\sqrt{n} \times \sqrt{n}$ grid shows that $g_2(n)=O\left(n/\sqrt{\log n}\right)$ and Erd\H{o}s conjectured that $g_2(n)=n^{1-o(1)}$. This conjecture was recently confirmed by Guth and Katz \cite{GK13}, who proved that $g_2(n)=\Omega\left(n/\log n\right)$. For $d \geq 3$, a $d$-dimensional grid demonstrates that $g_d(n)=O_d(n^{2/d})$ and this bound is also believed to be optimal.\footnote{Here, $O_d$ is the usual big-$O$ notation but the subscript indicates that the constant may depend on $d$. We use a similar convention for $\Omega_d$ and $\Theta_d$.} In high dimensions, an almost matching lower bound was given by Solymosi and Vu~\cite{SV}, who showed that $g_d(n) = \Omega_d(n^{(2 - \epsilon)/d})$, where $\epsilon$ tends to $0$ as $d$ tends to infinity.

Suppose that $2 \leq a \leq d+1$. A well-studied generalization of the distinct distance problem (see, for example, \cite{EP71, EP95, EPS}) asks for the minimal number $g_{a,d}(n)$ of nonzero $(a-1)$-dimensional volumes determined by the subsets of order $a$ of $n$ points in $\mathbb{R}^d$, assuming that not all points lie on a common $(a-2)$-dimensional hyperplane. Note that $g_d(n) = g_{2, d} (n)$. For $a = d+1$, it is easy to see that $g_{d+1,d}(n) \leq \lfloor \frac{n-1}{d}\rfloor$ by taking $d$ sets of about $n/d$ equally spaced points on parallel lines through the vertices of a $(d-1)$-simplex. 
Erd\H{o}s, Purdy and Straus \cite{EPS} conjectured that this bound is tight for $n$ sufficiently large  depending on $d$. Following progress in \cite{BP} and \cite{DT07}, Pinchasi~\cite{P08} finally solved the $d=2$ case of this conjecture by proving that $g_{3,2}(n)=\lfloor \frac{n-1}{2}\rfloor$. Dumitrescu and Cs.~T\'oth~\cite{DuTo} proved that $g_{d+1,d}(n)=\Theta_d(n)$ for all $d$.

A related problem of Erd\H{o}s~\cite{Er57, Er70} from 1957 asks for $h_d(n)$, the maximum $t$ such that every $n$-point set $P$ in $\mathbb{R}^d$ contains a subset $S$ of $t$ points such that all ${t \choose 2}$ distances between the pairs of points in $S$ are distinct. Erd\H{o}s conjectured that $h_1(n)=(1+o(1))\sqrt{n}$. The set $P=\{1,\ldots,n\}$ gives the upper bound $h_1(n) \leq (1+o(1))\sqrt{n}$, while a lower bound of the form $h_1(n) = \Omega(\sqrt{n})$ follows from a result of Koml\'os, Sulyok and Szemer\'edi \cite{KSS73}. In two dimensions, building on earlier results in \cite{AEP} and \cite{LT} and utilizing an important estimate from the work of Guth and Katz \cite{GK13}, Charalambides \cite{Ch} improved the bound\footnote{Actually, the bound stated in that paper is slightly worse, but a  careful analysis of the proof gives this bound.} to $h_{2}(n)=\Omega(n^{1/3}/\log^{1/3} n)$. Since the $\sqrt{n} \times \sqrt{n}$ grid has $O(n/\sqrt{\log n})$ distinct distances, it follows that $h_2(n)=O(n^{1/2}/\log^{1/4} n)$. For all $d \geq 3$, Thiele~\cite{Th} showed that $h_{d}(n)=\Omega_d (n^{1/(3d-2)})$. We give the following improvement to this bound. 

\begin{proposition} \label{prop:distances}
For each integer $d \geq 2$, there exists a positive constant $c_d$ such that
$$h_{d}(n) \geq c_dn^{\frac{1}{3d-3}}\left(\log n\right)^{\frac{1}{3}-\frac{2}{3d-3}}.$$
\end{proposition}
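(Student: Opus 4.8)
The plan is to use the probabilistic deletion (alteration) method. Given an $n$-point set $P\subset\mathbb{R}^d$, I would select a random subset $S$ by including each point independently with probability $p$, and then delete one point from every ``bad'' configuration that survives, where a bad configuration is a repeated distance: two distinct pairs $\{x,y\}\neq\{z,w\}$ with $|xy|=|zw|$. The surviving set has all pairwise distances distinct, and its expected size is at least $pn$ minus the expected number of surviving bad configurations, so the whole problem reduces to counting repeated distances in $\mathbb{R}^d$.

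There are two types of bad configuration: isosceles triples (three points $a,b,c$ with $|ab|=|ac|$), whose number I call $Q_3$, and generic quadruples (four distinct points with $|ab|=|cd|$), whose number I call $Q_4\approx\tfrac12\sum_r f(r)^2$, where $f(r)$ is the number of pairs at distance $r$. A triple survives with probability $p^3$ and a quadruple with probability $p^4$, so the expected size of the final set is at least $pn-p^3Q_3-p^4Q_4$. Choosing $p$ so that each subtracted term is a small fraction of $pn$ gives a distinct-distance subset of size $\gtrsim\min\{\,n^{3/2}Q_3^{-1/2},\,n^{4/3}Q_4^{-1/3}\,\}$. In every dimension I expect the quadruple term to bind, so the target bound $h_d(n)\gtrsim n^{1/(3(d-1))}$ will follow once I establish the distance-energy estimate $Q_4\lesssim n^{\,4-1/(d-1)}$ (up to logarithmic factors), with the isosceles count $Q_3$ bounded comfortably below the matching threshold $n^{\,3-2/(3(d-1))}$.

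So the real content is the geometric estimate on repeated distances, and here the cases separate. For $d=2$ this is exactly the Guth--Katz theorem, which yields $\sum_r f(r)^2=O(n^3\log n)$; feeding this in reproduces Charalambides' bound $n^{1/3}\log^{-1/3}n$, matching the proposition at $d=2$. For $d=3$ I would use the unit-distance bound $\max_r f(r)=O(n^{3/2})$ together with $\sum_r f(r)=\binom{n}{2}$ to get $\sum_r f(r)^2\le(\max_r f(r))\sum_r f(r)=O(n^{7/2})$, which is exactly the exponent $4-1/(d-1)=7/2$ needed at $d=3$. The isosceles count $Q_3$ I would handle by fixing the apex $a$ and noting that points at a common distance from $a$ lie on a $(d-1)$-sphere, reducing the count to a unit-distance problem one dimension down.

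The main obstacle is dimension $d\ge 4$. There the trivial energy bound collapses: a single distance can be realized by $\Theta(n^2)$ pairs (for instance, two orthogonal circles of equal radius make all cross-distances equal), so $\max_r f(r)$ is useless and, worse, no choice of $p$ makes even the initial random subset usable. The point is that such popular distances force low-dimensional structure on $P$, and the natural way to exploit this is an induction on $d$: a distance realized by many pairs concentrates points on spheres, which are $(d-1)$-dimensional, so one recurses on the $(d-1)$-dimensional problem for the popular part while the deletion argument above disposes of the distances of bounded multiplicity. Carrying out this structural dichotomy quantitatively---thresholding distances by multiplicity, bounding the energy contributed by the rare distances, and extracting a genuine dimension reduction from the popular ones---is the crux, and it is what should produce both the exponent $1/(3(d-1))$ and the stated power of $\log n$ after the final optimization of $p$.
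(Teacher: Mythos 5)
Your approach (random sampling plus deletion, driven by global counts of repeated-distance configurations) is genuinely different from the paper's, but it has a concrete gap already at $d=3$, before the difficulty you yourself flag at $d\ge 4$. The problem is the isosceles count $Q_3$, which you claim stays ``comfortably below'' the threshold $n^{3-2/(3(d-1))}$. In $\mathbb{R}^3$ this is false: place $n/2$ points on a circle and $n/2$ points on the line through its center perpendicular to its plane. Every point of the line is equidistant from all points of the circle, so each of the $\Theta(n)$ line points is the apex of $\Theta(n^2)$ isosceles triples, giving $Q_3=\Theta(n^3)$, whereas your threshold at $d=3$ is $n^{8/3}$. Feeding $Q_3=\Theta(n^3)$ into $pn-p^3Q_3-p^4Q_4$ forces $p=O(1/n)$ and yields a subset of size $O(1)$. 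Your proposed remedy for $Q_3$ (reduce to a unit-distance problem one dimension down) cannot succeed, precisely because this example shows that no bound better than $n^3$ is true: a large isosceles count is a low-dimensional concentration phenomenon, not something you can defeat by counting. So the structural dichotomy you defer to $d\ge 4$ is already unavoidable at $d=3$, and it is exactly the part of the argument you never carry out.

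The paper's proof shows how to run that dichotomy, in a way that eliminates global energy estimates entirely (no Guth--Katz-type bounds are needed beyond the base case, and Lenz-type configurations cause no trouble). Let $s_d(t)$ be the number of points on a $d$-dimensional sphere that forces a $t$-point distinct-distance subset. Given $P$ in $\mathbb{R}^d$ (or on a $d$-sphere), either some point $p\in P$ has $m=s_{d-1}(t)$ points of $P$ equidistant from it---these lie on a $(d-1)$-dimensional sphere and one recurses, the induction bottoming out in Charalambides' bound $s_2(t)=O(t^3\log t)$ on $\mathbb{S}^2$---or else no point has $m$ points at a common distance, i.e.\ the coloring of pairs by distance is $m$-good in the paper's sense. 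In that case the rainbow-clique bound $g_2(m,t)=O(mt^3/\log t)$ of Alon, Jiang, Miller and Pritikin applies; its proof is the very sampling-plus-deletion argument you propose, except that the expected number of monochromatic pairs inside a random $2t$-subset is controlled using only the local $m$-good hypothesis, never a global count like $Q_3$ or $Q_4$. Unwinding the recursion gives $H_{2,d}(t)=O(t^{3d-3}(\log t)^{3-d})$, which inverts to the stated bound. If you want to salvage your argument, the fix is the same: replace the global quantities $Q_3,Q_4$ by a pointwise multiplicity hypothesis, and send the equidistant-concentration case into an induction on the dimension of a sphere.
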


\noindent
An upper bound comes from the $d$-dimensional grid with sides of length $n^{1/d}$. This set contains $n$ points and the total number of distances between pairs is $O_d(n^{2/d})$. It follows that a distinct distance subset must have $O_d(n^{1/d})$ points. Hence, $h_{d}(n) =O_d(n^{1/d})$. 

We are interested in estimating the following generalization of this function. Let $h_{a,d}(n)$ be the largest integer $t$ such that any set of $n$ points in $\mathbb{R}^d$ contains a subset of $t$ points for which all the non-zero volumes of the ${t \choose a}$ subsets of order $a$ are distinct. In particular, $h_d(n)=h_{2,d}(n)$. Note that if we did not disregard subsets of volume zero, we could place all of our points on a hyperplane of dimension $a - 2$. Then, since every subset of order $a$ has volume zero, the largest distinct volume subset would have $a$ points. We compare only non-zero volumes so as to avoid this degeneracy (though an alternative solution is discussed in the concluding remarks).

 For infinite sets, an analogous function was studied by Erd\H{o}s \cite{Er50} in the 1950s. However, to the best of our knowledge, the finite version has not been studied before. Our main theorem says that $h_{a,d}(n)$ is of polynomial size for all $a$ and $d$. This is trivial for $a > d + 1$, since then all volumes are necessarily zero. For $a \leq d+1$, our main result is as follows.

\begin{theorem}\label{main1}
For all integers $a$ and $d$ with $2 \leq a \leq d+1$, there exists a positive constant $c_{a,d}$ such that
\[h_{a,d} (n) \geq c_{a,d} n^{\frac{1}{(2a-1)d}}.\]
\end{theorem}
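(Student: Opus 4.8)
The plan is to find the desired subset by a probabilistic deletion argument, with the geometric input packaged as a bound on the number of volume collisions. First I would record the algebraic shape of a volume. Writing an $a$-subset as $\{p_1,\dots,p_a\}$, the square of its $(a-1)$-dimensional volume equals $\frac{1}{((a-1)!)^2}\det G$, where $G$ is the Gram matrix of the edge vectors $p_2-p_1,\dots,p_a-p_1$; this is a polynomial in the $ad$ coordinates of the points that has degree $2$ in the coordinates of each individual point. Call an unordered pair $\{A,B\}$ of distinct $a$-subsets a \emph{collision} if $0\neq \mathrm{vol}(A)=\mathrm{vol}(B)$, and classify collisions by the overlap $j=|A\cap B|\in\{0,1,\dots,a-1\}$, so that a collision of type $j$ involves exactly $2a-j$ points and is cut out by the single polynomial equation $\det G_A=\det G_B$. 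A subset of points in which no collision occurs is exactly a distinct-volume subset, so it suffices to produce a large collision-free subset.

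The selection mechanism I would use is the standard alteration: include each of the $n$ points independently with probability $p$, and then delete one point from every surviving collision. If $N_j$ denotes the number of collisions of type $j$ in the whole point set, then the expected size of the resulting collision-free subset is at least $pn-\sum_{j=0}^{a-1} N_j\,p^{2a-j}$. Choosing $p$ to balance the linear gain $pn$ against the dominant error term then yields a collision-free subset of size a suitable power of $n$; to land on the exponent $\frac{1}{(2a-1)d}$ one needs the collision counts to obey savings of the form $N_j\le C_{a,d}\,n^{2a-j-\gamma_j}$ with $\gamma_j$ of order $1/d$, after which the balancing is routine bookkeeping.

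The entire difficulty is thus to bound $N_j$. The natural tool is a codegree estimate: fixing all but one of the $2a-j$ points of a type-$j$ collision, the remaining point is forced onto the zero set of a polynomial of degree $2$ in its $d$ coordinates --- a quadric, which for $j=a-1$ degenerates to a pair of hyperplanes parallel to the affine span of the shared base. When the points are spread out, so that no such quadric captures more than a small fraction of them, summing this codegree bound over the $O(n^{2a-j-1})$ choices of the fixed points gives exactly the savings required above.

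The main obstacle is that this spread-out hypothesis can fail badly: Lenz-type configurations (for instance, many points distributed on two circles in orthogonal $2$-planes of $\RR^4$) force a single distance --- and hence, by taking products, a single volume --- to be attained by a near-maximal number of subsets, so that in the worst case $N_j$ is as large as the trivial count and the global deletion argument collapses. I would resolve this exactly as in the one-dimensional case, by a concentration dichotomy feeding an induction on the dimension $d$: if a positive fraction of the points lie on some hyperplane, pass to that hyperplane and recurse in dimension $d-1$ (the base of the induction being the distinct-distance theorem on a line of Koml\'os, Sulyok and Szemer\'edi, which already supplies a Sidon-type subset of size $\Omega(\sqrt n)$); otherwise the points are sufficiently spread that the codegree bound applies and the deletion argument runs. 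Carrying this induction through while certifying that the concentrated branch genuinely lowers the dimension, and that the exponent degrades to no worse than $\frac{1}{(2a-1)d}$ at each step, is the delicate part of the argument.
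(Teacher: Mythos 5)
Your framework---color $a$-sets by volume, bound collisions via a codegree estimate, then sample-and-delete---is essentially the paper's Lemma~\ref{keylem} on $m$-good colorings, and your instinct that concentration must be handled by an induction is also the paper's. But the dichotomy you propose is over the wrong family of objects, and this is a genuine gap, not a technicality. Your ``otherwise'' branch asserts: if no positive fraction of the points lies on a hyperplane, then the codegree bound applies. This implication is false. Take $a=2$ and a point $p$ together with $n-1$ points in general position on a sphere centered at $p$: no $d+2$ of these points lie on any hyperplane, yet all $n-1$ points are equidistant from $p$, so the quadric through which you want to bound the codegree contains $n-1$ points. The concentration sets that break the codegree bound are the volume loci themselves---spheres for $a=2$, cylinders (loci of fixed distance to an $(a-2)$-flat) for general $a$---and these are curved: points concentrated on them need not concentrate on any hyperplane, so passing to a hyperplane and recursing in $\mathbb{R}^{d-1}$ never confronts them. (A side remark: your claim that for $j=a-1$ the quadric degenerates to a pair of parallel hyperplanes is only true when $a=d+1$; for $a<d+1$ it is a cylinder, which is exactly why that case is the easy one, treated separately in Proposition~\ref{prop:simplex}.)

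The repair is forced to be more radical than your hyperplane recursion, because the family of objects you recurse on must be closed under the operation ``intersect with a volume locus'': a sphere meets a volume locus in a curved set that is neither a sphere nor a hyperplane, and after one more step the sets leave every explicit family. This is precisely why the paper generalizes the statement to $H_{a,d,r}(t)$, for point sets lying on an \emph{irreducible variety} of dimension $d$ and bounded degree $r$, and inducts on the dimension of the variety rather than of the ambient space. The engine making that induction run is Lemma~\ref{le:hbt} (from Hartshorne): intersecting an irreducible $d$-dimensional variety $V$ with the hypersurface $\{f=0\}$ defined by a fixed $(a-1)$-set and volume $\ell>0$ either gives all of $V$ (impossible, since points of $A$ itself would have volume $\ell>0$ with $A$) or a \emph{bounded number} of irreducible components, each of dimension exactly $d-1$ and \emph{bounded degree}; any component containing $H_{a,d-1,r'}(t)$ points finishes by induction, and otherwise the coloring is $jH_{a,d-1,r'}(t)$-good and Lemma~\ref{keylem} applies. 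Your proposal has no substitute for this control of number, dimension, and degree of the concentration sets, and without it the ``spread'' branch of your argument cannot be certified for any $a\geq 3$; supplying that control is the actual content of the paper's proof.
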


\noindent
In certain special cases, this bound can be significantly improved. In particular, when $a = d +1$, we will show that $h_{d+1, d}(n) \geq c_d n^{1/(2d+2)}$.

An upper bound again follows from considering the $n^{1/d} \times \dots \times n^{1/d}$ grid. For example, since the square of the area formed by any three points in $\mathbb{R}^d$ may be written as a quartic polynomial with integer coefficients, the triples of $n^{1/d} \times \dots \times n^{1/d}$ form $O_d(n^{4/d})$ distinct areas. On the other hand, suppose that $X$ is a distinct area subset of $n^{1/d} \times \dots \times n^{1/d}$. Then, since the line through any two points contains at most $n^{1/d}$ other points, the points of $X$ form at least $|X|(|X|-1)(|X| - n^{1/d})/6$ distinct non-zero areas. Comparing this with the upper bound, we see that $h_{3,d}(n) = O_d(n^{\frac{4}{3 d}})$. For $a \geq 4$, a slight variant of this analysis shows that $h_{a,d} = O_{a,d} (n^{\frac{a-2}{d}})$.

We will estimate the largest distinct volume subset in a set of $n$ points by considering a coloring of the complete $a$-uniform hypergraph on the $n$ points, where we color an $a$-set depending on the volume of the simplex formed by the $a$-set. Our aim then is to find a large rainbow clique in this coloring, that is, a subset of points   whose edges have different colors. The key lemma, proved in Section~\ref{rainbow-sec}, says that if the coloring is sparse in each color, then it contains large rainbow cliques. As we will see in Section~\ref{spec-sec}, this lemma easily allows us to prove lower bounds on $h_{2,d}(n)$ and $h_{d+1, d}(n)$. In order to extend the method to give lower bounds on $h_{a,d}(n)$ for all $2 < a < d+1$, we will need to use some tools from algebraic geometry. This will be discussed in Section~\ref{general-sec}. We then conclude by discussing alternative definitions, infinite versions, algorithmic aspects and open problems.

\section{Rainbow cliques in $m$-good complete hypergraph colorings}
\label{rainbow-sec}
Call an edge-coloring of a $k$-uniform hypergraph {\it $m$-good} if each $(k-1)$-tuple of vertices is contained in at most $m$ edges of any particular color. In particular, in a $1$-good coloring, the edges containing any given $(k-1)$-tuple are all different colors. 

Let $g_k(m,t)$ denote the smallest $n$ such that every $m$-good edge-coloring
of the complete $k$-uniform hypergraph $K_n^{(k)}$ on $n$ vertices yields a rainbow copy of $K_t^{(k)}$.  Alon,  Jiang, Miller and Pritikin \cite{AJMP} proved that $g_2(m,t)=\Theta(mt^3/\log t)$. Here we prove a general estimate on $g_k(m,t)$. 

\begin{lemma}\label{keylem}
For positive integers $k, m$ and $t$ with $k \geq 2$, $g_k(m,t) \leq 4 mt^{2k-1}$. 
\end{lemma}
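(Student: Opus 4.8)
The plan is to prove the bound $g_k(m,t) \leq 4mt^{2k-1}$ by a probabilistic (random sampling / alteration) argument, choosing a random subset of vertices and then deleting a vertex from every ``bad'' configuration that prevents the remaining set from inducing a rainbow clique. Fix $n = 4mt^{2k-1}$ and an arbitrary $m$-good edge-coloring of $K_n^{(k)}$. I would sample a random subset $R \subseteq [n]$ by including each vertex independently with some probability $p$ to be optimized, so that $\E{|R|} = pn$. A set $R$ fails to contain a rainbow $K_t^{(k)}$ precisely when it is ``too small'' or when it contains a monochromatic pair of edges, that is, two distinct $k$-edges of the same color sharing their vertex set inside $R$. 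The strategy is to bound the expected number of such monochromatic (color-repeating) pairs, delete one vertex from each, and show that enough vertices survive.

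The key counting step is to estimate the expected number of bad pairs of same-colored edges within $R$. A pair of distinct $k$-sets $\{A, B\}$ of the same color spans $|A \cup B|$ vertices, which ranges from $k+1$ (when they share $k-1$ vertices) up to $2k$ (when disjoint). Here the $m$-goodness enters: for each $(k-1)$-tuple the number of edges of a given color through it is at most $m$, which lets me bound the number of monochromatic pairs spanning a given set of $k+1$ vertices, and more generally the number of monochromatic pairs spanning any fixed $j$ vertices. A pair spanning $j$ vertices survives into $R$ with probability $p^{j}$, and since $j \geq k+1$, the dominant (largest) contribution after scaling comes from the pairs on the fewest vertices, i.e.\ $j = k+1$; the total expected number of monochromatic pairs in $R$ is thus bounded by roughly $m \, p^{k+1} n^{k+1}$ up to constants. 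Deleting one vertex from each such pair and discarding the (small) deviation where $|R| < \frac{1}{2}pn$, I obtain a rainbow-clique-inducing set of expected size at least $\frac{1}{2}pn - C m p^{k+1} n^{k+1}$.

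The final step is to optimize $p$ so that this surviving set has size at least $t$. Choosing $p$ of order $(mn^{k})^{-1/k}$ balances the two terms, and substituting $n = 4mt^{2k-1}$ should make the surviving set size at least $t$ with the stated constants, yielding a rainbow $K_t^{(k)}$ and hence $g_k(m,t) \leq 4mt^{2k-1}$. I would want to be slightly careful that a rainbow clique requires all ${t \choose k}$ edges to have distinct colors, not merely that no two share a vertex set; but in a complete $k$-uniform hypergraph each $k$-subset is a single edge, so ``rainbow'' just means no two edges receive the same color, which is exactly the condition that there are no monochromatic pairs among the edges of the induced clique. Thus killing all monochromatic pairs suffices.

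I expect the main obstacle to be the counting in the second step: correctly bounding, via $m$-goodness, the number of monochromatic edge-pairs spanning each possible number of vertices $j \in \{k+1, \dots, 2k\}$, and verifying that the $j = k+1$ term indeed dominates after weighting by $p^{j}$ so that the crude bound $C m p^{k+1} n^{k+1}$ is legitimate. The exponent $2k-1$ in the target bound is the telltale sign of this balance: it is exactly what comes out of setting $p^{k+1} n^{k+1} \cdot m \approx p n$ and solving, and getting the constant down to $4$ will require tracking these estimates tightly rather than with the loosest possible bounds.
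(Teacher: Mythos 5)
Your overall strategy---sample a random vertex subset, then delete one vertex from each same-colored pair of edges---is the same alteration argument the paper uses (the paper takes a uniform random subset of fixed size $2t$ instead of Bernoulli sampling, an immaterial difference). But your key counting claim is wrong, and the error breaks the proof. By $m$-goodness, the number of same-colored pairs of edges meeting in $s$ vertices, hence spanning $j=2k-s$ vertices, is at most $C_j\, m n^{j-1}$ for a constant depending on $k$ (this is the paper's bound $A_s \leq \frac{m n^{2k-s-1}}{2\,s!\,(k-s)!^2}$). After weighting by the survival probability $p^{j}$, the contribution of pairs spanning $j$ vertices is of order $m n^{j-1}p^{j} = m(np)^{j}/n$, which is \emph{increasing} in $j$ whenever $np>1$. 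Since you need $np \geq t > k$ for the sample to have any hope of containing $t$ vertices, in every useful regime the dominant contribution comes from the \emph{disjoint} pairs ($j=2k$), not from the maximally overlapping pairs ($j=k+1$) as you assert; your claim would hold only when $np<1$, i.e., when the sample is empty on average. The correct bound on the expected number of bad pairs is of order $m(np)^{2k}/n$.

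This error then invalidates your optimization of $p$. Balancing $m p^{k+1}n^{k+1} \approx pn$ as you propose gives $(np)^k \approx 1/m$, i.e., an expected sample size $np = m^{-1/k} \leq 1$, from which no $t$-point set can be extracted; so the exponent $2k-1$ does \emph{not} come from that balance. (Even reading your pair count as $m n^{k}p^{k+1}$, which is what the $j=k+1$ pairs actually contribute, balancing gives $np \approx (n/m)^{1/k} \approx t^{2-1/k}$, at which point the neglected disjoint-pair term is of order $m(np)^{2k}/n \approx t^{2k-1}$, far exceeding the sample size, so the deletion step destroys the entire sample.) The fix is to take $np \approx 2t$, exactly the paper's sample size: then the expected number of bad pairs is at most roughly $m(2t)^{2k}/n$, and with $n = 4mt^{2k-1}$ and careful tracking of the constants $\frac{1}{2\,s!\,(k-s)!^2}$ this is at most $t$, so deleting one vertex per bad pair leaves at least $2t - t = t$ vertices spanning a rainbow $K_t^{(k)}$. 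In other words, the exponent $2k-1$ is dictated by the disjoint-pairs count via $mt^{2k}/n \approx t$, which is the opposite of the mechanism you describe.
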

\begin{proof}
Consider an $m$-good edge-coloring $c$ of $K_n^{(k)}$ with $n= 4m t^{2k-1}$. We wish to prove that this coloring contains a rainbow $K_t^{(k)}$. If $t \leq k$, every $t$-set is trivially rainbow, so we may assume $t > k$. 

We first give an upper bound on $A_s$, the number of unordered pairs $\{e_1,e_2\}$ of distinct edges with $c(e_1)=c(e_2)$ and $|e_1 \cap e_2|=s$. For a given color $\gamma$ and an $s$-set $S$ of vertices, let $B_{\gamma} (S)$ be the number of edges $e$ with  
$c(e)=\gamma$ and $S \subset e$. As each superset of $S$ of size $k-1$ is contained in at most $m$ sets of size $k$ and color $\gamma$, we have $B_{\gamma} (S) \leq B_s:=\frac{m}{k-s}{n-s \choose k-1-s}$, where we divide by $k-s$ to account for the fact that we count any particular edge $k-s$ times. By adding over all ${n \choose s}$ possible $s$-sets $S$ and noting that $\sum_{\gamma} \binom{B_\gamma(S)}{2}$ is maximized (given that $\sum_{\gamma} B_{\gamma}(S)={n-s \choose k-s}$)  when all of the $B_{\gamma}(S)$ are as large as possible, it follows that
\begin{eqnarray*}
A_s & \leq & \sum_S \sum_{\gamma} \binom{B_\gamma(S)}{2} \leq {n \choose s}\frac{{n-s \choose k-s}}{B_s}{B_s \choose 2} = {n \choose s}\frac{n-k+1}{m}{B_s \choose 2}\\ 
& \leq & \frac{n^{s+1}}{2m \cdot s!}\left(\frac{m}{k-s}{n-s \choose k-1-s}\right)^2 \leq \frac{m n^{2k-s-1}}{2s!(k-s)!^2}.
\end{eqnarray*}
Let $T$ be a random subset of order $2t$. The expected number of unordered pairs $\{e_1,e_2\}$ of distinct edges in $T$ with $c(e_1)=c(e_2)$ is, by summing over all possible intersection sizes of $e_1 \cap e_2$,  
\begin{eqnarray*}\sum_{s=0}^{k-1} A_s{2t \choose 2k-s}/{n \choose 2k-s} & \leq & \sum_{s=0}^{k-1} A_s\left(\frac{2t}{n}\right)^{2k-s} \leq \sum_{s=0}^{k-1} \frac{m n^{2k-s-1}}{2s!(k-s)!^2} \left(\frac{2t}{n}\right)^{2k-s}  \\ & = & 
\frac{mt^{2k}}{n}\sum_{s=0}^{k-1} \frac{2^{2k}}{2s!(k-s)!^2}(2t)^{-s} \leq \frac{4 mt^{2k}}{n} =  t.\end{eqnarray*}
Hence, there is a set of order $2t$ with at most $t$ pairs of distinct edges with the same color. Deleting one vertex from each such pair of edges, there is a subset of order at least $t$ in which all edges have different colors. Hence, the coloring contains a rainbow $K_t^{(k)}$, completing the proof.
\end{proof}

Following the method of~\cite{AJMP}, this result can be improved to $g_k(m,t) = O_k(m t^{2k-1}/\log t)$. However, since our final results are unlikely to be sharp in their polynomial dependence, we will not track these additional logarithmic factors. 

\section{First estimates} 
\label{spec-sec}
Let $s_d(t)$ denote the minimum $n$ such that every set of $n$ points on the $d$-dimensional sphere $\mathbb{S}^d : = \{x \in \mathbb{R}^{d+1} : \|x\| = 1\}$ contains a subset of order $t$ such that all ${t \choose 2}$ distances between pairs of points are distinct. The work of Charalambides \cite{Ch} gives the bound $s_2(t)=O(t^3\log t)$. We use this as the base case in the following lemma.

\begin{lemma}
For all integers $d, t \geq 2$, $$s_d(t) \leq  g_2(s_{d-1}(t),t) =O(s_{d-1}(t)t^3/\log t).$$
In particular, there exists a positive constant $C_d$ such that $$s_d(t) \leq C_d t^{3d-3}(\log t)^{3-d}.$$
\end{lemma}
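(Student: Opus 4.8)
The plan is to establish the recursive inequality $s_d(t) \le g_2(s_{d-1}(t),t)$ by a dimension-reduction argument, and then to unwind this recursion using the base case $s_2(t) = O(t^3 \log t)$ of Charalambides together with the bound $g_2(m,t) = \Theta(mt^3/\log t)$ of Alon, Jiang, Miller and Pritikin. The closed-form estimate then follows by induction on $d$. Both of these inputs may be treated as black boxes, so the real work is the geometric recursion.

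For the recursive inequality, let $P$ be a set of $n = g_2(s_{d-1}(t),t)$ points on $\mathbb{S}^d$ and color the complete graph $K_n$ on $P$ by assigning to each pair $\{x,y\}$ the color $\|x-y\|$, its Euclidean distance. A rainbow $K_t$ in this coloring is precisely a $t$-point subset all of whose pairwise distances are distinct, so it suffices to produce one, and I would split into two cases according to whether the coloring is $s_{d-1}(t)$-good. If it is, then since $n = g_2(s_{d-1}(t),t)$, the definition of $g_2$ immediately yields a rainbow $K_t$. Otherwise, some vertex $x$ lies in more than $s_{d-1}(t)$ edges of a single color $r$; equivalently, more than $s_{d-1}(t)$ points of $P$ lie at distance exactly $r$ from $x$. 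This is the geometric heart of the argument: the locus $\{y \in \mathbb{S}^d : \|y-x\| = r\}$ is the intersection of the unit sphere with a sphere of radius $r$ about $x$, hence a $(d-1)$-dimensional sphere lying in an affine hyperplane of $\mathbb{R}^{d+1}$. Up to a rigid motion and a uniform rescaling this is a copy of $\mathbb{S}^{d-1}$, and since at least $s_{d-1}(t)$ of our points lie on it, the definition of $s_{d-1}(t)$ furnishes $t$ of them with all pairwise distances distinct, again completing the proof.

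To obtain the explicit bound, I would feed the AJMP estimate $g_2(m,t) = O(mt^3/\log t)$ into the recursion, giving $s_d(t) \le C\, s_{d-1}(t)\, t^3/\log t$ for a suitable constant $C$. Starting from $s_2(t) = O(t^3 \log t)$ and applying this step $d-2$ times yields
\[
s_d(t) \le C^{d-2}\,(t^3 \log t)\,(t^3/\log t)^{d-2} = C_d\, t^{3d-3}(\log t)^{3-d},
\]
since the exponent of $t$ is $3 + 3(d-2) = 3d-3$ and the exponent of $\log t$ is $1 - (d-2) = 3-d$.

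I would expect the identification of the equidistant locus with a rescaled $\mathbb{S}^{d-1}$ to be the only substantive step; everything else is bookkeeping over the two external inputs. The one point deserving a line of care is that distinctness of distances is preserved when passing between the standard sphere $\mathbb{S}^{d-1}$ and the translated, rescaled cross-section sphere sitting in $\mathbb{R}^{d+1}$: this holds because a uniform scaling multiplies all distances by a common factor, and because the cross-section is isometrically embedded in $\mathbb{R}^{d+1}$, so its intrinsic distances coincide with the ambient ones used to apply the definition of $s_{d-1}(t)$.
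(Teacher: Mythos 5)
Your proposal is correct and follows essentially the same argument as the paper: color pairs by distance, note that a monochromatic neighborhood of a point lies on a $(d-1)$-dimensional sphere handled by $s_{d-1}(t)$, otherwise the coloring is $s_{d-1}(t)$-good and $g_2$ furnishes the rainbow $K_t$, then unwind the recursion from the Charalambides base case. Your explicit verification that rescaling the cross-section sphere to $\mathbb{S}^{d-1}$ preserves distinctness of distances is a detail the paper leaves implicit, but it is not a different approach.
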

\begin{proof}
Suppose we have a set $P$ of $n=g_2(s_{d-1}(t),t)$ points in $\mathbb{S}^d$. If there is a point $p \in P$ which has $m=s_{d-1}(t)$ points equidistant from $p$, then these $s_{d-1}(t)$ points lie on a sphere of dimension $d-1$. From the definition of $s_{d-1}(t)$, this would imply that there is a subset of $t$ points such that all ${t \choose 2}$ distances between pairs of points are distinct and we would be done. Hence, we may assume otherwise. Color the edge between each pair of points of $P$ by the distance between them. Since this coloring is $m$-good, the definition of $g_2(m,t)$ implies that there must be a rainbow $K_t$ in this edge-coloring of $K_n$. The vertex set of this rainbow $K_t$ is the desired set of $t$ points with distinct distances between each pair of points. By the result of Alon et al.~\cite{AJMP}, we have $g_2(m,t)=O(mt^3/\log t)$, which, since $m = s_{d-1}(t)$, completes the proof of our recursive estimate. The bound $s_d(t) \leq C_d t^{3d-3}(\log t)^{3-d}$ follows from this recursive estimate and the case $d=2$.  
\end{proof}

Let $H_{a,d}(t)$ be the inverse function of $h_{a,d}(n)$. More precisely, $H_{a,d}(t)$ is the minimum $n$ such that any set of $n$ points in $\mathbb{R}^d$ contains a subset of $t$ points for which all the non-zero volumes of the ${t \choose a}$ subsets of size $a$ are distinct. Essentially the same proof as above gives the following result, which implies Proposition~\ref{prop:distances}.   

\begin{proposition}
For all integers $d, t \geq 2$, $$H_{2,d}(t) \leq  g_2(s_{d-1}(t),t) =O(s_{d-1}(t)t^3/\log t).$$
In particular, there exists a positive constant $c_d$ such that $$h_{2,d}(n) \geq c_dn^{\frac{1}{3d-3}}(\log n)^{\frac{1}{3}-\frac{2}{3d-3}}.$$
\end{proposition}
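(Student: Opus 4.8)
The plan is to copy the proof of the preceding lemma almost verbatim, replacing the sphere $\mathbb{S}^d$ by the whole of $\mathbb{R}^d$; the recursion still descends onto a $(d-1)$-sphere. Concretely, I would set $m = s_{d-1}(t)$ and take a set $P$ of $n = g_2(s_{d-1}(t),t)$ points in $\mathbb{R}^d$. First dispose of the degenerate case: if some point $p \in P$ has $m$ other points of $P$ lying at a common distance $\rho$ from it, then those points lie on the sphere of radius $\rho$ about $p$, a copy of $\mathbb{S}^{d-1}$. The Euclidean distance between two such points equals their chordal distance on that sphere, so a subset with all pairwise distances distinct as measured on the sphere is automatically a distinct-distance subset in $\mathbb{R}^d$; since $m = s_{d-1}(t)$, the definition of $s_{d-1}$ supplies the required $t$-point subset and we are done.

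Otherwise no point of $P$ has as many as $m$ points at any single distance from it. I would colour each edge of $K_n$ on vertex set $P$ by the Euclidean distance between its endpoints. The edges of a fixed colour incident to a vertex $p$ correspond exactly to the points at that distance from $p$, of which there are fewer than $m$, so the colouring is $m$-good. The definition of $g_2(m,t)$ then yields a rainbow $K_t$, whose vertices form $t$ points with all pairwise distances distinct. This proves $H_{2,d}(t) \le g_2(s_{d-1}(t),t)$, and the Alon--Jiang--Miller--Pritikin estimate $g_2(m,t) = O(mt^3/\log t)$ gives the first displayed bound.

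For the closed form I would substitute the bound $s_{d-1}(t) \le C_{d-1} t^{3(d-1)-3}(\log t)^{3-(d-1)}$ from the previous lemma (valid for $d \ge 3$; the case $d = 2$ is $h_{2,2}(n) = h_2(n) = \Omega(n^{1/3}/\log^{1/3} n)$, which is Charalambides' theorem quoted in the introduction). The extra factor $t^3/\log t$ collapses the exponents to $H_{2,d}(t) = O\bigl(t^{3d-3}(\log t)^{3-d}\bigr)$. The only step requiring genuine care is the inversion of this relation: writing $n = H_{2,d}(t)$ and solving $n \asymp t^{3d-3}(\log t)^{3-d}$, the leading term forces $t \asymp n^{1/(3d-3)}$ up to logarithmic factors, so that $\log t \asymp \tfrac{1}{3d-3}\log n$ and the nested logarithm may be replaced by a constant multiple of $\log n$. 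Feeding this back in and balancing the logarithmic powers produces $t \asymp n^{1/(3d-3)}(\log n)^{(d-3)/(3(d-1))}$, and since $\tfrac{d-3}{3(d-1)} = \tfrac{1}{3} - \tfrac{2}{3d-3}$ this is exactly the claimed lower bound on $h_{2,d}(n)$. Since the reduction itself is routine once the sphere observation is noted, I expect this logarithm-tracking in the inversion, together with the separate verification of the $d=2$ edge case, to be the only points needing attention.
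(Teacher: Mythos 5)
Your proof is correct and takes essentially the same route as the paper, which itself only remarks that the proposition follows by ``essentially the same proof'' as the preceding sphere lemma: the equidistant-points-lie-on-a-sphere reduction, the $m$-good colouring with $m = s_{d-1}(t)$, the Alon--Jiang--Miller--Pritikin bound on $g_2(m,t)$, and the closed form obtained from the lemma's bound on $s_{d-1}(t)$ together with Charalambides' planar result as the $d=2$ base case. If anything, your explicit handling of the $d=2$ edge case and of the logarithm bookkeeping in the inversion is more careful than what the paper records.
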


Another straightforward case is when $a = d+1$ because we may exploit the fact that the locus of points forming a given volume with a fixed set of $d$ points is a pair of parallel hyperplanes.

\begin{proposition} \label{prop:simplex}
For all integers $d \geq 2$ and $t \geq d + 1$, 
$$H_{d+1, d}(t) \leq g_{d+1}(2t,t) \leq 8 t^{2d+2}.$$
In particular,
$$h_{d+1,d}(n) \geq n^{\frac{1}{2 d + 2}}/2.$$
\end{proposition}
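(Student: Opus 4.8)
The plan is to reduce Proposition~\ref{prop:simplex} to the key rainbow-clique Lemma~\ref{keylem} with $k = d+1$, exactly mirroring the strategy already used for $H_{2,d}$, so the only genuinely new ingredient is the verification that the natural volume-coloring is $2t$-good. Given a set $P$ of $n = g_{d+1}(2t,t)$ points in $\mathbb{R}^d$, I would color each $(d+1)$-subset $e$ of $P$ by the $d$-dimensional volume of the simplex it spans, treating the color ``zero'' as a special degenerate case. The hypergraph here is $(d+1)$-uniform, so a $(k-1)$-tuple is a set of $d$ points, i.e.\ a potential base facet of a simplex. The crux is the geometric observation flagged before the statement: once $d$ affinely independent points are fixed, the locus of a $(d+1)$st point that completes a simplex of a prescribed nonzero volume $V$ is precisely the pair of hyperplanes parallel to the affine span of the $d$ points, one on each side at the distance making the volume equal $V$.

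From that observation the goodness bound should fall out quickly. Fix a $d$-set $F$ and a color $\gamma$. If $F$ is affinely dependent (its $d$ points lie in a common $(d-2)$-flat) then every simplex on $F$ has volume zero, so $F$ is contained in no edge of any \emph{nonzero} color and contributes nothing; thus I only need to bound the multiplicity of nonzero colors, which dovetails with the convention that zero volumes are disregarded. If $F$ is affinely independent, then every point $p$ giving color $\gamma$ lies on one of the two parallel hyperplanes described above. A single hyperplane in $\mathbb{R}^d$ can contain at most\ldots many points of $P$, but what I actually need is only that the \emph{two} hyperplanes together account for all such $p$, and then I must control how many of the $n$ points of $P$ can lie on those two hyperplanes. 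In the worst case this is not automatically bounded, so the clean route is to handle it the same way the sphere argument in the previous lemma handled equidistant points: if too many points of $P$ lie on a single hyperplane (a copy of $\mathbb{R}^{d-1}$), recurse. Concretely, I would set the threshold so that if any $(d-1)$-dimensional affine subspace contains $m := H_{d+1-1,\,d-1}(t)$ points, we descend to a lower-dimensional instance and finish by induction on $d$; otherwise each hyperplane holds fewer than $m$ points, so each of the two hyperplanes contributes at most $m$ completions and the coloring is $2m$-good.

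Reading the target bound $g_{d+1}(2t,t)$ tells me the intended value is $m = t$, i.e.\ the dichotomy should be: either some hyperplane contains $t$ points of $P$ (in which case those $t$ coplanar points automatically give a distinct-volume subset once one checks the degeneracy convention, or we recurse), or no hyperplane contains $t$ points and each nonzero color meets each base facet $F$ in at most $2t$ edges. I would therefore phrase the dichotomy directly in terms of $t$ rather than a recursive $H$, which is cleaner and matches the stated ``$2t$''. Assuming the coloring is $2t$-good, Lemma~\ref{keylem} with $k = d+1$ and $m = 2t$ gives $g_{d+1}(2t,t) \le 4\cdot(2t)\cdot t^{2(d+1)-1} = 8\,t^{2d+2}$, which is exactly the claimed bound on $H_{d+1,d}(t)$. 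Inverting, $n \le 8\,t^{2d+2}$ yields $t \ge (n/8)^{1/(2d+2)} \ge n^{1/(2d+2)}/2$, giving the stated lower bound on $h_{d+1,d}(n)$.

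The step I expect to be the main obstacle is pinning down the degenerate-color bookkeeping so that the ``$2t$-good'' claim is airtight: I need to be sure that (i) the two parallel hyperplanes really capture \emph{all} points producing a fixed nonzero volume with a fixed affinely independent base, and (ii) the convention of ignoring zero volumes meshes with the definition of an $m$-good coloring, since a rainbow clique in the usual sense forbids repeated colors but here I must additionally forbid or carefully permit repeated zero-volume simplices. The likely fix is to treat ``volume $0$'' not as one color but to argue that in the final $t$-point subset we never need to compare degenerate simplices, or equivalently to first pass to a subset in general position; this interaction between the $m$-good counting and the nonzero-volume convention is where the argument must be made precise.
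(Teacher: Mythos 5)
Your proposal follows essentially the same route as the paper: color the $(d+1)$-sets by volume, use the two-parallel-hyperplanes observation to get $2t$-goodness, and invoke Lemma~\ref{keylem} with $k=d+1$ and $m=2t$, finishing with the same arithmetic. The one point you leave open --- the zero-volume bookkeeping you flag as the main obstacle --- the paper closes with exactly the device you half-suggest: each zero-volume edge receives its own \emph{unique} color, so degenerate simplices never form a monochromatic pair, never enter the $m$-good count, and never obstruct rainbowness. With that convention the dichotomy needs neither recursion nor a pass to general position: if some hyperplane contains $t$ points of $P$, those $t$ points are already a valid distinct-volume subset, since every $(d+1)$-subset of them has volume zero and $h_{d+1,d}$ only compares non-zero volumes; otherwise each of the two parallel hyperplanes holds fewer than $t$ points, the coloring is $2t$-good, and the rest of your argument goes through verbatim.
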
 
\begin{proof}
Consider a set $P$ of $n = g_{d+1}(2t,t)$ points in $\mathbb{R}^d$. For a given subset $D$ of size $d$ and a given number $\ell>0$, the locus $L$ of points in $\mathbb{R}^d$ which together with $D$ have volume $\ell$ forms two hyperplanes parallel to and on opposite sides of the hyperplane containing $D$. If either of these hyperplanes contains $t$ points of $P$, we have found the required subset, with every volume being zero (recall that our definition only required distinct non-zero volumes). We may therefore assume that there are at most $2t$ points on $L$. Consider a coloring of the edges of the complete $(d+1)$-uniform hypergraph with vertex set $P$, where each edge with zero volume receives a unique color and each edge of non-zero volume is colored by that volume. By the above discussion, this edge-coloring is $2t$-good. From the definition of $g_{d+1}(2t,t)$, the set $P$ must contain a rainbow clique of order $t$, which is a set of $t$ points such that all non-zero volumes of subsets of order $d+1$ are different. Lemma \ref{keylem} implies that  $g_{d+1}(2t,t) \leq 8t^{2d+2}$ and the lower bound on $h_{d+1,d}(n)$ follows. 
\end{proof}

\section{The general case}
\label{general-sec}
To prove our lower bound on $h_{a,d}$, we need to prove a more general
theorem. 
This will require concepts and results from algebraic geometry. 
Rather than working in $\mathbb{R}^N$, it will be useful to work in a projective space over an algebraically closed field. We will accordingly consider our set of points as a subset of $\mathbb{CP}^N$, projective $N$-space over the complex numbers. This space corresponds to the set of complex lines in $\mathbb{C}^{N+1}$, that is, two points are identified if one is a complex multiple of the other. 

In $\mathbb{R}^N$, notions of area and volume (or, more correctly, their squares) can be defined
in terms of multivariate polynomials. When we are working over projective space, we cannot similarly define the volume of a set of points. However, given a polynomial defined on $\mathbb{R}^N$, we can lift such an expression to a homogeneous polynomial defined on $\mathbb{CP}^N$. This is necessary for any equations defined by these polynomials to be well-defined over projective space. To give an example, consider the equation which says that the distance $d(y, u)$ between two points $y, u \in \mathbb{R}^N$ is equal to $\ell$. If $y = (y_1, \dots, y_N)$ and $u = (u_1, \dots, u_N)$, then this equation may be written as
\[\sum_{i=1}^N (y_i - u_i)^2 = \ell^2.\]
If we consider $u$ as fixed and $y$ as a variable, this is an equation in $N$ real variables $y_1, \dots, y_N$. This may now be lifted to a homogeneous polynomial on $\mathbb{CP}^N$ by setting $y_i = \frac{x_i}{x_0}$ for each $i$ and multiplying out. This yields
\[\sum_{i=1}^N (x_i - u_i x_0)^2 = \ell^2 x_0^2,\]
which is a homogeneous polynomial in the variables $x_0, x_1, \dots, x_N$, where we now allow these variables to take complex values. Note that if $(x_0, x_1, \dots, x_N)$ satisfies this equation, then so does $(\lambda x_0, \lambda x_1, \dots, \lambda x_N)$ for any $\lambda \in \mathbb{C}$. That is, the equation is well-defined on $\mathbb{CP}^N$. Moreover, any  $y$ which satisfied the original equation still satisfies this new equation since, in $\mathbb{CP}^N$, the point $y \in \mathbb{R}^N$ corresponds to the point $(1, y)$ and its multiples.

Homogeneous polynomials are of fundamental importance in algebraic geometry. Indeed, the basic object of study in this field is the {\it variety}, defined to be the set of solutions in $\mathbb{CP}^N$ to a collection of homogeneous polynomials $f_1(x) = \dots = f_k(x) = 0$. 
We say that $V$ is \emph{irreducible} if it cannot be written as $V = C_1 \cup C_2$, where $C_1$ and $C_2$ are distinct, non-empty closed sets in $\mathbb{CP}^N$, neither of which equals $V$.
To every variety, there is an invariant referred to as the \emph{Hilbert polynomial}. 
The \emph{dimension} of $V$ is the degree $d$ of the Hilbert polynomial of $V$ and its \emph{degree} is defined to be $d!$ times the leading coefficient of the Hilbert polynomial of $V$. We will not need to know the explicit definition of the Hilbert polynomial in what follows. However, some intuition may be gained by noting that if the polynomials $f_1, \dots, f_k$ are well-behaved, then the dimension is $d = N-k$ and the degree of $V$ is the product of the degrees of $f_1, \dots, f_k$.

For our purposes, we only need one key result from algebraic geometry. This follows from Theorem I, 7.7 of Hartshorne's book on algebraic geometry~\cite{Ha}.

\begin{lemma}\label{le:hbt}
Let $V$ be an irreducible variety of dimension $d$ and $f$ a homogeneous polynomial.
Let 
$$W=V\cap \{x \mid f(x)=0\}.$$
Either $W=V$ or all of the following must hold:
\begin{enumerate}
\item $W$ is the union of irreducible varieties $W = Z_1 \cup \dots \cup Z_j$.
\item The degrees of $Z_1, \dots, Z_j$ are bounded by a function of the degree of $V$ and the degree of $f$.
\item The number of components $j$ is bounded by a function of the degree of $V$ and the degree of $f$.
\item All of the components $Z_1, \dots, Z_j$ have dimension exactly $d-1$.
\end{enumerate}
\end{lemma}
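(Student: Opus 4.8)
The plan is to derive everything from the two main results of Section~7 of Chapter~I of Hartshorne: the Projective Dimension Theorem (Theorem I.7.2) and B\'ezout's Theorem (Theorem I.7.7). Throughout, write $H = \{x \mid f(x) = 0\}$ for the hypersurface cut out by $f$, so that $W = V \cap H$. I would first assume $f$ is nonconstant (otherwise $W$ is empty or all of $V$, both trivial), so that $H$ is a genuine hypersurface of dimension $N-1$ and of degree at most $\deg f$. The dichotomy is then settled by irreducibility of $V$: we have $W = V$ precisely when $f$ vanishes identically on $V$, and in the remaining case $W = V \cap H$ is a proper closed subset of $V$. Property~1 is immediate, since every closed algebraic subset of $\mathbb{CP}^N$ is a finite union of irreducible varieties (the Noetherian decomposition into irreducible components); write $W = Z_1 \cup \dots \cup Z_j$ for this decomposition.

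For property~4, I would sandwich the dimension of each component from both sides. The Projective Dimension Theorem applied to $V$ (dimension $d$) and $H$ (dimension $N-1$) guarantees that every irreducible component $Z_i$ of $W = V \cap H$ has dimension at least $d + (N-1) - N = d-1$. On the other hand, each $Z_i$ is a closed irreducible subset of the irreducible variety $V$, and since $W \neq V$ it is a proper subset, so $\dim Z_i \leq d-1$. Hence every component has dimension exactly $d-1$, and $W$ is pure of dimension $d-1$. This is precisely the expected-dimension condition $\dim W = \dim V + \dim H - N$ required to invoke B\'ezout.

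With purity in hand, properties~2 and~3 follow from a single accounting identity. B\'ezout's Theorem yields $\sum_{i=1}^{j} m_i \deg Z_i = (\deg V)(\deg H) \leq (\deg V)(\deg f)$, where the $m_i = i(V,H;Z_i) \geq 1$ are the intersection multiplicities. Because every term on the left is a positive integer, each individual degree satisfies $\deg Z_i \leq (\deg V)(\deg f)$, giving property~2; likewise the number of summands obeys $j \leq \sum_i m_i \deg Z_i \leq (\deg V)(\deg f)$, giving property~3. All three bounds are explicit functions of $\deg V$ and $\deg f$, as required.

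The main obstacle is the purity-of-dimension step, which is exactly what makes B\'ezout applicable: the Projective Dimension Theorem supplies only the lower bound $d-1$, and without the matching upper bound coming from irreducibility of $V$ the intersection could a priori carry lower-dimensional components that B\'ezout does not control. A secondary technical point is that $H = \{f = 0\}$ need not be reduced when $f$ has repeated factors, and B\'ezout is most cleanly stated for varieties; one handles this either by passing to the reduced hypersurface or by summing over the irreducible factors of $f$, which only lowers $\deg H$ below $\deg f$ and hence strengthens rather than weakens the stated bounds. Phrasing the hypersurface degree as $\deg H \leq \deg f$ throughout keeps this harmless.
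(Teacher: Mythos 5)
Your proof is correct and follows essentially the same route as the paper, which simply cites Theorem I.7.7 of Hartshorne (together with the surrounding dimension results) without writing out the details; your argument is the standard unpacking of that citation, combining the projective dimension theorem with B\'ezout's equality $\sum_i m_i \deg Z_i = (\deg V)(\deg H)$ to get purity of dimension and the bounds on the degrees and the number of components. The technicality you flag about $f$ having repeated or reducible factors is real but handled correctly by your reduction to the irreducible factors of $f$.
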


\noindent
Note that if $d = 1$, this gives a form of B\'{e}zout's theorem: the intersection $W$ consists of components of dimension 0 and bounded degree, that is, a bounded number of isolated points.

In order to facilitate our induction, it will be useful to consider a function which is more general than $h_{a,d}(n)$ and allows for the points to be taken within a $d$-dimensional irreducible variety of degree $r$. In what follows, we will be concerned with two different notions of dimension, the dimension $d$ of the variety and the dimension $N$ of the space in which it is embedded. When we mention a variety of dimension $d$, we will always assume that it is in $\mathbb{CP}^N$ for some $N \geq d$.
 
Let $H_{a,d,r}(t)$ be the minimum $n$ such that any set of $n$ points in $V \cap \mathbb{R}^N$, where $V$ is an irreducible variety of dimension $d$ and degree $r$, contains a subset of $t$ points for which all the non-zero volumes of the ${t \choose a}$ subsets of size $a$ are distinct. In the degenerate case $d = 0$, we define $H_{a, 0, r}(t) = 1$. Our main theorem is now as follows.

\begin{theorem}\label{thm:general}
For all integers $r, d \geq 1$ and $a \geq 2$, there exist positive integers $r'$ and $j$ such that, for all integers $t \geq a$,  
$$H_{a,d, r}(t) \leq g_a(j H_{a, d-1, r'}(t), t) \leq 4 j H_{a, d-1, r'}(t) t^{2a - 1}.$$ 
In particular, there exists a positive constant $c_{a,d}$ such that
$$h_{a,d}(n) \geq c_{a,d} n^{\frac{1}{(2a - 1)d}}.$$
\end{theorem}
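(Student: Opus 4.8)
The plan is to induct on the dimension $d$ of the variety, using Lemma~\ref{keylem} to convert a bound on the ``goodness'' of a natural coloring into a rainbow clique, and Lemma~\ref{le:hbt} to establish that bound. Given a set $P$ of $n$ points in $V \cap \mathbb{R}^N$ with $V$ irreducible of dimension $d$ and degree $r$, I color the complete $a$-uniform hypergraph on $P$ as in Proposition~\ref{prop:simplex}: every $a$-set of zero volume receives its own unique color, and every $a$-set of nonzero volume is colored by its volume. A rainbow $K_t^{(a)}$ in this coloring is precisely a $t$-point subset all of whose nonzero volumes are distinct, so it suffices to produce one.

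To apply Lemma~\ref{keylem} I must bound the goodness parameter $m$, that is, the number of points $x \in P$ forming a fixed nonzero volume $\ell$ with a fixed $(a-1)$-tuple $S = \{p_1, \dots, p_{a-1}\} \subseteq P$. Writing the squared volume of the simplex on $S \cup \{x\}$ as a polynomial in the coordinates of $x$ (via a Gram or Cayley--Menger determinant) and homogenizing as in Section~\ref{general-sec}, I obtain a homogeneous polynomial $f$ whose zero locus on $V$ contains all such $x$. Two features are essential. First, $f$ does not vanish identically on $V$, since evaluating at the point $p_1 \in V$ gives a degenerate simplex of volume $0 \neq \ell$. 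Second, the degree of $f$ is bounded by a function of $a$ alone, independent of the ambient dimension $N$, because the determinant has size depending only on $a$. Lemma~\ref{le:hbt} then writes $W = V \cap \{f = 0\}$ as a union $Z_1 \cup \dots \cup Z_j$ of irreducible varieties, each of dimension $d-1$ and degree at most $r'$, where $j$ and $r'$ depend only on $r$ and $a$.

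This yields a clean dichotomy for $m := j\,H_{a,d-1,r'}(t)$. If some $(a-1)$-tuple $S$ and nonzero color $\ell$ are witnessed by more than $m$ points of $P$, then these points all lie in $W$, so by pigeonhole some component $Z_i$ contains at least $H_{a,d-1,r'}(t)$ of them; since $Z_i$ is irreducible of dimension $d-1$ and degree at most $r'$, the definition of $H_{a,d-1,r'}$ furnishes the desired $t$-point subset and we are done. (Zero-volume colors never violate $m$-goodness, as each is used by a single edge.) Otherwise the coloring is $m$-good, and taking $n = g_a(m,t)$ produces a rainbow $K_t^{(a)}$ by the definition of $g_a$. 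Either way $H_{a,d,r}(t) \leq g_a(j\,H_{a,d-1,r'}(t), t)$, and Lemma~\ref{keylem} bounds this by $4 j\,H_{a,d-1,r'}(t)\, t^{2a-1}$.

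For the closed-form bound I unroll this recursion from $d$ down to the base case $H_{a,0,r}(t) = 1$, which is correct because a $0$-dimensional irreducible variety is a single point. Each of the $d$ levels contributes a factor of $t^{2a-1}$ together with a constant depending only on $a$, $d$ and $r$, giving $H_{a,d,r}(t) \leq C_{a,d,r}\, t^{(2a-1)d}$. Applying this with $V = \mathbb{CP}^d$, an irreducible variety of dimension $d$ and degree $1$, bounds $H_{a,d}(t)$, and inverting yields $h_{a,d}(n) \geq c_{a,d}\, n^{1/((2a-1)d)}$. The main obstacle is the algebraic input of the middle step: verifying that $f$ is not identically zero on $V$ and, more importantly, that $\deg f$ (and hence $j$ and $r'$) can be bounded uniformly in $N$, so that the constants do not degrade as the induction passes through varieties embedded in ever-larger projective spaces. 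A minor bookkeeping point is that the components $Z_i$ may have degree strictly less than $r'$; this is handled by taking $r'$ to be a common upper bound and using monotonicity of $H_{a,d-1,\cdot}(t)$ in the degree parameter.
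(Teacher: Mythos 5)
Your proof follows the paper's argument essentially step for step: the same coloring (unique colors for zero-volume $a$-sets, volume as color otherwise), the same use of Lemma~\ref{le:hbt} to split $W = V \cap \{f=0\}$ into boundedly many irreducible components of dimension $d-1$ and bounded degree, with non-vanishing of $f$ on $V$ checked exactly as in the paper by plugging in a vertex of the simplex, the same pigeonhole argument giving $jH_{a,d-1,r'}(t)$-goodness, and the same recursion down to the degenerate case $H_{a,0,r}(t)=1$ followed by the observation that $\mathbb{R}^d \subset \mathbb{CP}^d$ is a variety of dimension $d$ and degree $1$. The two points you single out as delicate --- that $\deg f$ is bounded in terms of $a$ alone, and that the components have degree at most (rather than exactly) $r'$ --- are genuine but are treated the same way (indeed, glossed over) in the paper's own proof, so your write-up is, if anything, slightly more careful on these points.
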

 
\begin{proof}
Consider a set $P$ of $n$ points in $V \cap \mathbb{R}^N$, where $V$ is an irreducible variety of dimension $d$ and degree $r$. For a given subset $A$ of $P$ of size $a - 1$ and a given $\ell > 0$, consider the set of points $x$ in $\mathbb{R}^N$ which together with $A$ have volume $\ell$. By our earlier discussions, it is possible to lift the equation saying that the volume of the simplex formed by $A \cup \{x\}$ is equal to $\ell$ to a homogeneous equation $f(x) = 0$. 

Consider now the set $W = V\cap \{x \mid f(x)=0\}$. By Lemma~\ref{le:hbt}, either $W = V$ or $W$ splits into a bounded number of components with dimension $d-1$ and bounded degree. If $W = V$, this implies that the volume of the simplex formed by $A \cup \{x\}$ is equal to $\ell$ for all points $x$ in $V \cap \mathbb{R}^N$. Taking $x \in A$, we see that this volume must be zero, contradicting our assumption that $\ell > 0$.

We may therefore assume that $W = Z_1 \cup \dots \cup Z_j$, where each $Z_i$ is an irreducible variety of degree at most $r'$ and both $r'$ and $j$ depend only on $a$, $d$ and $r$. Suppose that $d \geq 2$. If any $Z_i$ contains $H_{a, d-1, r'} (t)$ points of $P$, then, by definition, $P$ contains a subset of $t$ points for which all the non-zero volumes of the ${t \choose a}$ subsets of size $a$ are distinct. Therefore, $W$ contains at most $j H_{a, d - 1, r'}(t)$ points of $P$. In the $d = 1$ case, each $Z_i$ is an isolated point. Since $H_{a, 0, r'}(t) = 1$, this again implies that $W$ contains at most $j H_{a, d-1, r'}(t)$ points of $P$.
 
Now consider a coloring of the complete $a$-uniform hypergraph with vertex set $P$, where each edge with zero volume receives a unique color and each edge of non-zero volume is colored by that volume. By the above discussion, we know that this coloring is $j H_{a, d-1, r'}(t)$-good. For $n =  g_a(j H_{a, d-1, r'}(t), t)$, the definition of $g_a$ implies that there must be a rainbow clique of order $t$, that is, a set of $t$ points such that all non-zero volumes of the subsets of order $a$ are distinct. 

Iterating the recurrence relation $H_{a,d, r}(t) \leq 4 j H_{a, d-1, r'}(t) t^{2a - 1}$ easily implies that $H_{a,d,r}(t) \leq C_{a,d,r} t^{(2a-1)d}$ for some constant $C_{a,d,r}$. The lower bound on $h_{a,d}(n)$ now follows from noting that $\mathbb{R}^d \subset \mathbb{CP}^d$, which is a variety of dimension $d$ and degree $1$. 
\end{proof}
 
\section{Concluding remarks}

\subsection{Alternative definitions}

If one wishes to find a large distinct volume subset containing only non-zero volumes, it is necessary to make some additional assumption about the set of points we are considering. The most natural assumption is to suppose that there are no zero volumes in the set, that is, that no $a$ points lie on an $(a-2)$-dimensional subspace. The function we are then interested in is $h'_{a,d}(n)$, defined to be the largest integer $t$ such that any set of $n$ points in $\mathbb{R}^d$, no $a$ on the same $(a-2)$-dimensional space, contains a subset of $t$ points for which all the volumes of the ${t \choose a}$ subsets of size $a$ are distinct. Note that unlike $h_{a,d}$ this definition only makes sense when $a \leq d+1$. Nevertheless, for $2 \leq a \leq d+1$, it is not hard to alter the proof of Theorem~\ref{main1} to show that
\[h'_{a,d}(n) \geq c_{a,d} n^{\frac{1}{(2a-1)d}}.\]
Moreover, in the particular case where $a = d+1$, we may show that $h'_{d+1,d}(n) \geq c_d n^{1/(2d+1)}$, improving slightly on the bound for $h_{d+1,d}(n)$. The proof of this is almost identical to Proposition~\ref{prop:simplex}, but uses the fact that there are at most $d$ points on any hyperplane.

As the $d$-dimensional grid contains many collinear points, the upper bounds for $h_{a,d}(n)$ discussed in the introduction are not valid under this alternative definition. However, we can instead consider a subset of the $d$-dimensional grid with no $a$ points lying on an $(a-2)$-dimensional hyperplane. In the case $a=3$, finding such a set is known as the no-three-in-line problem and is well-studied, dating back to 1917~\cite{1917}. For fixed positive integers $d$ and $a$, let $F_{a,d}(t)$ be the maximum number of points in the $d$-dimensional grid $t \times \cdots \times t$ such that no $a$ of these points are contained in an $(a-2)$-dimensional affine subspace. For fixed $d$, it is shown in~\cite{L13} that $F_{3,d}(t) = \Omega_d(t^{d-2})$. This may in turn be used to show that $h'_{3,d}(n) = O_d(n^{4/3(d-2)})$. Further estimates on $F_{a,d}(t)$ which imply upper bounds on $h'_{a,d}(n)$ are given in \cite{L13}. 

\subsection{Infinite sets}

As mentioned in the introduction, an infinite variant of our problem was first studied by Erd\H{o}s~\cite{Er50} in 1950. Under the assumption of the axiom of choice, he proved that $h_{2,d}(\alpha) = \alpha$ for all $\alpha \leq 2^{\aleph_0}$. Moreover, for any regular cardinal $\alpha \leq 2^{\aleph_0}$ and any $2 \leq a \leq d+1$, he proved that $h_{a,d}(\alpha) = \alpha$. This result may fail for singular cardinals $\alpha$. 


\subsection{Algorithmic aspects}

We have proved that for all $2 \leq a \leq d+1$, any set of $n$ points in $\mathbb{R}^d$ contains a subset of $t = c_{a,d} n^{\frac{1}{(2a-1)d}}$ points such that all non-zero volumes formed by the $\binom{t}{a}$ sets of order $a$ are distinct. If one tracks the proof carefully, it is possible to show that our proof gives a randomized algorithm with running time $O_d(n^{d + O(1)})$. The central role here is played by Lemma~\ref{keylem}. Indeed, given $n \geq 4 m t^{2 a-1}$ and a coloring of the complete $a$-uniform hypergraph on $n$ vertices, then, in running time $n^{a + O(1)}$, we can find either a set of $a-1$ vertices with more than $m$ neighbors (it takes time at most $n^a$ to check these) or, if there is no such set, a rainbow set of order $t$. Following Lemma~\ref{keylem}, this is done by sampling a random set of size $2t$ and removing a single vertex from each pair with the same color. Note that comparing all pairs takes time at most $(2t)^{2a} \leq n^2$.

However, one can do better. We say that an edge $e$ is bad if it contains $a-1$ points which are contained in more than $m$ edges of the same color as $e$. Otherwise, we say that $e$ is good. Following the proof of Lemma~\ref{keylem}, but focusing now on pairs of distinct good edges, one can show that a random set $T$ of size $2t$ will, in expectation, contain at most $t$ pairs of good edges with the same color. We now search the set $T$ for bad edges. This takes time at most $(2t)^{a-1} n \leq n^2$ and if we find a bad edge, we are done. Otherwise, we know that all edges are good, so we may remove a vertex from each pair with the same color to produce a rainbow set of order at least $t$. Again, comparing edges has a running time of at most $(2t)^{2a} \leq n^2$. That is, we have an algorithmic version of Lemma~\ref{keylem} with running time $n^{O(1)}$. This may then be used to obtain a constructive version of Theorem~\ref{main1} running in time $O_{d}(n^{O(1)})$.

\subsection{Open problems}

If one uses the improved bound $g_k(m, t) = O_k(m t^{2k-1}/\log t)$ noted in Section $2$, 
Theorem~\ref{thm:general} easily implies that 
$$h_{a,d}(n) \geq c_{a,d} n^{\frac{1}{(2a-1)d}} (\log n)^{\frac{1}{2a-1}}.$$
It would be interesting to know if the polynomial dependence in this bound can be significantly improved. This would be particularly interesting in the case $a = d = 2$, where the outstanding open problem is to determine whether $h_{2}(n) = n^{1/2 - o(1)}$. If true, a proof of this fact is likely to require new ideas. 

\vspace{3mm}

\noindent
{\bf Acknowledgements.} The authors would like to thank Tucker Bane, Andrew Lohr, Jared Marx-Kuo, Joe Mileti, Jessica Shi, Srinivas Vasudevan and Yufei Zhao for helpful discussions.

\end{document}